\numberwithin{equation}{section} \theoremstyle{plain}
\newtheorem{thm}{Theorem}[section]
\newtheorem{lem}[thm]{Lemma}
\newtheorem{cor}[thm]{Corollary}
\newtheorem*{acknow}{Acknowledgments}
\def\<{\langle}
\def\>{\rangle}
\def\({\left(}
\def\){\right)}
\def\[{\left[}
\def\]{\right]}
\title[Compact Submanifolds with Pinched Ricci Curvature]
{Rigidity Results for Compact Submanifolds with Pinched Ricci Curvature in Euclidean and Spherical Space Forms}
\author[J.Q. Ge]{Jianquan Ge}
\address{School of Mathematical Sciences, Laboratory of Mathematics and Complex Systems, Beijing Normal
University, Beijing 100875, P.R. CHINA.}
\email{jqge@bnu.edu.cn}
\author[Y. Tao]{Ya Tao}
\address{Chern Institute of Mathematics and LPMC, Nankai University, Tianjin 300071, P. R. China.}
\email{tao-ya@mail.nankai.edu.cn}
\author[Y. Zhou]{Yi Zhou$^{*}$}
\address{Beijing International Center for Mathematical Research, Peking University,
Beijing 100871, P.R. CHINA.}
\email{yizhou@bicmr.pku.edu.cn}
\subjclass[2010]{53C20, 53C24, 53C40.}
\date{}
\keywords{Pinched Ricci curvature; rigidity theorem; Einstein.}
\thanks {$^{*}$ the corresponding author.}
\thanks{J. Q. Ge is partially supported by NSFC (No. 12171037, 12571049) and the Fundamental Research Funds for the Central Universities.}
\thanks{Y. Zhou is partially supported by  NSFC (No. 12171037, 12271040), China Postdoctoral Science Foundation (No. BX20230018)
and National Key R$\&$D Program of China 2020YFA0712800.}
\begin{document}
\maketitle

\begin{abstract}
For compact submanifolds in Euclidean and Spherical space forms with Ricci curvature bounded below by a function $\alpha(n,k,H,c)$ of mean curvature, we prove that the submanifold is either isometric to the Einstein Clifford torus, or a topological sphere for the maximal bound $\alpha(n,[\frac{n}{2}],H,c)$, or has up to $k$-th homology groups vanishing. This gives an almost complete (except for the differentiable sphere theorem) characterization of compact submanifolds with pinched Ricci curvature, generalizing celebrated rigidity results obtained by Ejiri, Xu-Tian, Xu-Gu, Xu-Leng-Gu, Vlachos, Dajczer-Vlachos.
\end{abstract}

\section{Introduction}\label{sec1}
In 1979, Ejiri \cite{Eji79} obtained a celebrated rigidity theorem for $n(\geq4)$-dimensional, simply connected, compact, orientable, minimal immersed submanifolds in the unit sphere
with pinched Ricci curvature $\operatorname{Ric}\geq n-2$.
In 2011, Xu-Tian \cite{XT11} removed the assumption that the submanifold is simply connected.
Later, Xu-Gu \cite{XG13} generalized Ejiri's rigidity theorem to $n(\geq3)$-dimensional, compact, orientable submanifolds in a real space form $F^{n+m}(c)$ with $c+H^2>0$ under the assumption $\operatorname{Ric}\geq (n-2)\(c+H^2\)$ and parallel mean curvature vector field $\mathcal{H}$ ($H:=|\mathcal{H}|$). In the same paper, Xu-Gu raised a conjecture, indicating that the above generalization should also hold without the assumption of parallel mean curvature vector field.
Recently, Dajczer-Vlachos \cite{DV2} affirmed Xu-Gu's conjecture for the case $n\geq 4$,
except for the part of diffeomorphic to spheres.
The same philosophy was used in a series of rigidity results \cite{DJV, DV1, DV3} for submanifolds with pinched Ricci curvature in different settings.
For the case of spherical space forms \cite{DJV, DV1},
the core techniques are different improvements of the following theorem:

\begin{thm}\label{Thm}$($\cite[Theorem 2]{V07}$)$
Let $M^n$ be an $n(\geq4)$-dimensional compact submanifold of the unit sphere $\mathbf{S}^{n+m}$.
Assume that the Ricci curvature satisfies
\begin{equation}
\operatorname{Ric}>b(n,k,H):=\frac{n(k-1)}{k}+\frac{n(k-1)H}{2k^2}\(nH+\sqrt{n^2H^2+4k(n-k)}\),
\end{equation}
where $k$ is an integer such that $2\leq k\leq\[\frac{n}{2}\]$.
Then $\pi_1(M)=0$ and $H_p(M;\mathbb{Z})=H_{n-p}(M;\mathbb{Z})=0$ for all $1\leq p\leq k$.
\end{thm}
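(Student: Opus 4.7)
The plan is to invoke the Lawson--Simons criterion for nonexistence of stable integral currents: for a compact submanifold $M^n\subset\mathbf{S}^{n+m}$, if at every $x\in M$ and for every $p$-plane $V\subset T_xM$ with orthonormal basis $\{e_1,\ldots,e_p\}$ completed to $\{e_1,\ldots,e_n\}$ of $T_xM$ one has
\begin{equation*}
\sum_{\alpha=1}^{p}\sum_{\beta=p+1}^{n}\bigl(2|h(e_\alpha,e_\beta)|^2-\langle h(e_\alpha,e_\alpha),h(e_\beta,e_\beta)\rangle\bigr)<p(n-p),
\end{equation*}
then no stable integral $p$-current exists on $M$, whence $H_p(M;\mathbb{Z})=0$ by Federer's flat chain theorem. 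Since the left-hand side is invariant under the exchange $V\leftrightarrow V^\perp$, the cases $1\leq p\leq k$ and $n-k\leq p\leq n-1$ are handled simultaneously, and the task reduces to deriving this pointwise inequality from the hypothesis $\operatorname{Ric}>b(n,k,H)$ for every $1\leq p\leq k$.

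The heart of the proof is algebraic. I would use the Gauss equation
\begin{equation*}
\operatorname{Ric}(e)=(n-1)+n\langle h(e,e),\mathcal{H}\rangle-\sum_{i=1}^{n}|h(e,e_i)|^2
\end{equation*}
and sum over $e=e_\alpha$ for $1\leq\alpha\leq p$ to recover the cross terms $|h(e_\alpha,e_\beta)|^2$ appearing in the Lawson--Simons expression. The diagonal terms $\langle h(e_\alpha,e_\alpha),h(e_\beta,e_\beta)\rangle$ are then produced by expanding the identity $n^2H^2=\bigl|\sum_\alpha h(e_\alpha,e_\alpha)+\sum_\beta h(e_\beta,e_\beta)\bigr|^2$. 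Combining these, the Lawson--Simons quantity becomes a quadratic form in $H$ and the partial traces of $h$, bounded above in terms of $\sum_\alpha\operatorname{Ric}(e_\alpha)$ and $H^2$. Inserting the Ricci pinching and optimizing by Cauchy--Schwarz together with a diagonalization of the $\mathcal{H}$-component of $h$ restricted to $V$ should produce the explicit threshold; the factor $\sqrt{n^2H^2+4k(n-k)}$ will emerge as the discriminant of the resulting quadratic inequality in the extremal case $p=k$.

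For the fundamental group, since $b(n,k,H)>0$ the Ricci curvature is uniformly bounded below by a positive constant, so Myers' theorem yields $|\pi_1(M)|<\infty$. The universal cover $\widetilde M$ is then compact, and the composition of the covering with the given immersion realizes $\widetilde M$ as an immersed compact submanifold of $\mathbf{S}^{n+m}$ satisfying the same pointwise hypotheses; consequently the homology vanishing applies verbatim to $\widetilde M$. A Xu--Tian-style comparison of Euler characteristics (or of Betti numbers together with the precise pattern of vanishing in low and high degrees) across the finite covering then forces the cover to be trivial, giving $\pi_1(M)=0$.

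The principal obstacle is the sharp algebraic estimate of the second paragraph. The specific form of $b(n,k,H)$ reveals it as the exact borderline of the Lawson--Simons inequality at $p=k$, so producing this threshold requires an optimization saturated by Clifford-type extremal configurations, with careful bookkeeping of the pieces of $h$ tangent and transverse to $\mathcal{H}$. Extracting the $\sqrt{n^2H^2+4k(n-k)}$ term with the correct coefficient, in a way uniform across all $p\leq k\leq[n/2]$, is the most technically demanding step.
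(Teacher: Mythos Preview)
The paper does not prove Theorem~\ref{Thm}; it is quoted from Vlachos \cite{V07} purely as background, in order to compare the pinching function $b(n,k,H)$ with the paper's own bound $\alpha(n,k,H,1)$. There is therefore no proof in this paper to set your proposal against.

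That said, your outline is the right one and is essentially Vlachos's original strategy: the Lawson--Simons nonexistence criterion (recorded here as Lemma~\ref{homology vanishing}) reduces the homology vanishing to the pointwise inequality $\Theta_p<p(n-p)$, and the substance of the proof is the algebraic estimate showing that $\operatorname{Ric}>b(n,k,H)$ forces this for each $1\le p\le k$. The present paper follows the same template for its main Theorem~\ref{ThmA}, with the corresponding algebra packaged into Lemma~\ref{Ricci pinching}; so your proposal is entirely in line with both \cite{V07} and the methods of this paper.

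One caution on the fundamental-group step: the Euler-characteristic comparison you invoke determines $|\pi_1|$ only when $\chi(M)\neq 0$, which is not guaranteed for odd $n$ or for $k<[n/2]$ since the middle Betti numbers are uncontrolled. The ``Xu--Tian style'' argument works cleanly only once $M$ and $\widetilde M$ are known to be integral homology spheres. In this paper simple-connectedness for Theorem~\ref{ThmA} is obtained not by a covering/Euler-characteristic count but by invoking Lemma~\ref{first vanishing 1} (from \cite{DV2}), which handles $p=1$ and $\pi_1$ directly under a weaker Ricci bound; you would need an analogous dedicated $p=1$ argument here rather than the $\chi$-comparison as stated.
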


According to \cite{V07}, the function $b(n,k,H)$ comes from an expression for the Ricci curvature of
the standard immersion of the torus $S^k(r)\times S^{n-k}(\sqrt{1-r^2})$ into the unit sphere $\mathbf{S}^{n+1}$, where $S^k\(r\)$ denotes the $k$-dimensional sphere of radius $r$.
It is natural to compare $b(n,k,H)$ with another lower bound $\alpha(n,k,H,c)$ which was used in a similar homology vanishing result \cite[Lemma 4.1]{XLG14} under Ricci curvature pinching condition.
We then define and describe the function $\alpha(n,k,H,c)$.

Throughout this paper, we use the notation $F^n(c)$ to denote the simply connected, $n$-dimensional
real space form with constant sectional curvature $c$.
In addition, we set
$$\alpha(n,k,H,c):=\bigg(n-1-\dfrac{k(n-k)(n-2)}{k(n-2k)+n(k-1)(n-k)}\bigg)\(c+H^2\).$$
For the Einstein torus $S^k\(\sqrt{\frac{k-1}{n-2}}\,r\)\times S^{n-k}\(\sqrt{\frac{n-k-1}{n-2}}\,r\)\subset S^{n+1}\(r\)$  $(2\leq k\leq\[\frac{n}{2}\])$
embedded in $F^{n+m}(c)$ with mean curvature $H$,
a direct calculation shows that its Ricci curvature can be expressed by $\operatorname{Ric}=\frac{n-2}{r^2}=\alpha(n,k,H,c)$.

Now we compare $\alpha(n,k,H,1)$ and $b(n,k,H)$ for $n\geq4$ and $2\leq k\leq\[\frac{n}{2}\]$.
By a direct calculation, we obtain
$$\begin{aligned}
b(n,k,H)-\alpha(n,k,H,1)=&\frac{n(k-1)}{2k^2}H\sqrt{n^2H^2+4k(n-k)}\\
&+\frac{n(k-1)(n-2k)}{k\big((n+2)\gamma(k)+2n\big)}\bigg(\frac{n}{2k}\gamma(k)H^2-(n-k)\bigg),
\end{aligned}$$
where $\gamma(k):=k(n-k)-n\geq n-4\geq0$. Hence we have the following results.
\begin{itemize}
\item[$(1)$] $\alpha(n,k,0,1)\geq b(n,k,0)$ and the equality holds if and only if $n=2k$.
\item[$(2)$] $\alpha(n,k,H,1)< b(n,k,H)$ for sufficiently large $H$.
\item[$(3)$] If $n=2k$, then we have $\alpha(n,\frac{n}2,H,1)\leq b(n,\frac{n}2,H)$
and the equality holds if and only if $H=0$.
\end{itemize}

In the following, we state the main theorem of this paper
which improves the pinched condition of \cite[Lemma 4.1]{XLG14} and presents a complete rigidity result.
\begin{thm}\label{ThmA}
Let $f: M^n\rightarrow F^{n+m}(c)$, $n\geq5$, $c\geq0$, be an isometric immersion of a compact, connected Riemannian manifold.
If the Ricci curvature of $M$ satisfies
\begin{equation*}\label{Pinchcond}
\operatorname{Ric}\geq\alpha(n,k,H,c)
=\bigg(n-1-\dfrac{k(n-k)(n-2)}{k(n-2k)+n(k-1)(n-k)}\bigg)\(c+H^2\)\tag{$*$}
\end{equation*}
for an integer $k$ with $2\leq k\leq\[\frac{n}{2}\]$, then one of the following holds:
\begin{itemize}
\item[$(1)$] $\pi_1(M)=0$ and
$$H_q(M;\mathbb{Z})=H_{n-q}(M;\mathbb{Z})=0\ \mbox{for each}\ 1\leq q\leq k.$$
If in addition $k<\[\frac{n}{2}\]$, then $H_{n-k-1}(M;\mathbb{Z})$ is torsion-free.
\item[$(2)$] $M$ is isometric to the Einstein hypersurface
$S^k\(\sqrt{\frac{k-1}{n-2}}\,r\)\times S^{n-k}\(\sqrt{\frac{n-k-1}{n-2}}\,r\)$
in the totally umbilical sphere $S^{n+1}\(r\)\subset F^{n+m}(c)$, where $r=\sqrt{\frac{n-2}{\alpha(n,k,H,c)}}$ with constant $H$.
In this case, $\operatorname{Ric}=\alpha(n,k,H,c)$ on $M$.
\end{itemize}
\end{thm}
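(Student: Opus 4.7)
The plan is to adapt the Lawson--Simons stability machinery (in the refined form used by Vlachos, Xu--Leng--Gu, and Dajczer--Vlachos) to the sharp pinching function $\alpha(n,k,H,c)$, and then to treat the borderline case by a rigidity argument tailored to the Einstein Clifford torus, which saturates the bound. The first task is to translate the Ricci pinching into a condition on the second fundamental form $h$: via the Gauss equation, the Ricci curvature at a unit vector $u$ reads
$$\operatorname{Ric}(u,u) = (n-1)c + n\langle h(u,u),\mathcal{H}\rangle - \sum_{i}|h(u,e_i)|^2$$
for any local orthonormal frame $\{e_i\}$. The specific value of the coefficient $\tfrac{k(n-k)(n-2)}{k(n-2k)+n(k-1)(n-k)}$ in $\alpha$ is engineered so that, summing the inequality $\operatorname{Ric}\geq\alpha$ over an orthonormal splitting $T_pM = V \oplus V^\perp$ with $\dim V = q$ for $1\leq q \leq k$, one recovers the critical Lawson--Simons threshold
$$\sum_{i \in V,\, j \in V^\perp}\Big(2|h(e_i,e_j)|^2 - \langle h(e_i,e_i),h(e_j,e_j)\rangle\Big) \leq q(n-q)(c+H^2).$$
Establishing this algebraic inequality and identifying exactly when equality holds (namely when $h$ has \emph{Clifford--torus structure} at the point) is the first technical step.

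When the inequality above is strict at every point of $M$ and for every choice of $V$, the Lawson--Simons theorem combined with the Federer--Fleming compactness rules out nonzero stable integral $q$- and $(n-q)$-currents for $1\leq q\leq k$, so $H_q(M;\mathbb{Z}) = H_{n-q}(M;\mathbb{Z}) = 0$ in that range. Since the pinching lifts to the universal cover and yields $\operatorname{Ric}>0$, a Myers--Hurewicz argument (used already in \cite{V07,XT11}) upgrades $H_1=0$ to $\pi_1(M)=0$. The torsion-free statement for $H_{n-k-1}(M;\mathbb{Z})$ when $k<\[\frac{n}{2}\]$ follows the Xu--Leng--Gu strategy: the same algebraic estimate remains strict at level $k+1$ for cycles of rational type, and combining this with Poincar\'e--Lefschetz duality and the universal coefficient theorem forces the torsion subgroup of $H_{n-k-1}$ to be trivial.

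The main obstacle is the borderline case in which equality is attained at some point $p_0\in M$. Equality forces a very rigid algebraic structure on $h$ at $p_0$: in an adapted orthonormal frame, the shape operator $A_{\mathcal{H}/H}$ must have exactly two eigenvalues with multiplicities $k$ and $n-k$, the remaining shape operators $A_\xi$ (for $\xi\perp\mathcal{H}$) must vanish on both eigenspaces, and the ratio of the two eigenvalues is pinned down to the value $\sqrt{(k-1)/(n-k-1)}$ carried by the Einstein Clifford torus. A continuation argument based on the Codazzi equation, together with the real-analyticity of the Lawson--Simons defect, propagates this pointwise rigidity from a nonempty closed set to a neighbourhood and then to all of $M$; along the way one obtains that $H$ is constant and $\nabla^\perp\mathcal{H}\equiv 0$. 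The two eigendistributions of $A_{\mathcal{H}/H}$ are then integrable and totally geodesic in $M$ with leaves of constant sectional curvature, so the Erbacher--Moore reduction of codimension embeds $M$ in a totally umbilical $S^{n+1}(r) \subset F^{n+m}(c)$ with $r = \sqrt{(n-2)/\alpha(n,k,H,c)}$, and a de Rham splitting identifies $M$ with $S^k\!\bigl(\sqrt{\tfrac{k-1}{n-2}}\,r\bigr)\times S^{n-k}\!\bigl(\sqrt{\tfrac{n-k-1}{n-2}}\,r\bigr)$ as required. The hardest step is precisely this propagation of pointwise equality to global rigidity, where a delicate interplay between the Codazzi identities and the sharp algebraic extremal characterization is needed.
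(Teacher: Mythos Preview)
Your overall architecture---translate the Ricci pinching into the Lawson--Simons inequality $(\#)$, apply the homology vanishing, and analyse the equality case---matches the paper's. But the borderline treatment has two genuine gaps.

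First, the dichotomy is not ``equality at some point $p_0$, then propagate.'' The Elworthy--Rosenberg refinement of Lawson--Simons (Lemma~\ref{homology vanishing}) says: if $(\#)$ holds everywhere and is \emph{strict at even one point}, then $H_q=0$. So if $H_k(M;\mathbb{Z})\neq 0$, equality in $(\#)$ with $q=k$ already holds at \emph{every} point of $M$ for some frame; no continuation or real-analyticity is needed (and none is available, since $f$ is only smooth). The pointwise algebraic consequence (Lemma~\ref{Ricci pinching}(2)) then gives $\operatorname{Ric}=\alpha(n,k,H,c)$ at every point, and Schur's lemma makes $M$ Einstein with constant $H$ in one stroke.

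Second, and more seriously, your description of the equality structure is wrong in higher codimension. Equality forces every shape operator to be simultaneously of the form $H_\alpha=\operatorname{diag}(\lambda_\alpha I_k,\mu_\alpha I_{n-k})$; it does \emph{not} force $A_\xi=0$ for $\xi\perp\mathcal{H}$, nor does it pin down a single eigenvalue ratio. The first normal space may have rank $2$, so Erbacher--Moore reduction does not apply directly, and your de~Rham splitting has no input. What the paper does instead is: observe that the normal bundle is flat; show that the two principal normals $\eta_1,\eta_2$ are globally defined Dupin principal normal fields; prove that the first normal bundle $N_1$ is parallel via the Rodriguez--Tribuzy criterion (Lemma~\ref{N1para}); and deduce from these that $\mathcal{H}$ is parallel. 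Only then does Onti's classification of Einstein submanifolds with flat normal bundle and parallel mean curvature, combined with the estimate $\alpha(n,k,H,c)>(n-3)(c+H^2)$ of Lemma~\ref{alpha}, force the product $S^k\times S^{n-k}$. Your sketch jumps straight to codimension reduction while omitting precisely the step ``$\mathcal{H}$ parallel,'' which is the whole point of removing the parallel-mean-curvature hypothesis from Xu--Gu.

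Two smaller corrections: the Lawson--Simons threshold is $q(n-q)c$, not $q(n-q)(c+H^2)$; and the torsion-freeness of $H_{n-k-1}$ follows purely formally from $H_k=0$ by the universal coefficient theorem and Poincar\'e duality---no separate estimate at level $k+1$ is needed.
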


The next corollary analyses the special case $k=\[\frac{n}{2}\]$ in Theorem \ref{ThmA}.
Thus we can give another proof for \cite[Theorem 1]{DV2} when $n\geq5$.
Different from Dajczer-Vlachos' method, when $M$ is not a topological sphere,
we show that $M$ has parallel mean curvature vector field by a careful study of the first normal bundle.

\begin{cor}\label{CorA}
Let $f: M^n\rightarrow F^{n+m}(c)$, $n\geq5$, $c\geq0$, be an isometric immersion of a compact, connected Riemannian manifold. If the Ricci curvature of $M$ satisfies
$$\operatorname{Ric}\geq(n-2)\(c+H^2\),$$
then $M$ is either homeomorphic to the $n$-sphere or isometric to the minimal Clifford hypersurface
$S^{k}\(r/\sqrt{2}\)\times S^{k}\(r/\sqrt{2}\)$
in the totally umbilical sphere $S^{n+1}\(r\)\subset F^{n+m}(c)$,
where $n=2k$ and $r=\frac{1}{\sqrt{c+H^2}}$.
\end{cor}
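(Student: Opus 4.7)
The plan is to derive Corollary~\ref{CorA} as a direct consequence of Theorem~\ref{ThmA} applied with the choice $k=\[\frac{n}{2}\]$. First I would verify that the pinching hypothesis $\operatorname{Ric}\geq(n-2)(c+H^2)$ implies the (weaker) bound $\operatorname{Ric}\geq\alpha(n,\[\frac{n}{2}\],H,c)$ needed to invoke Theorem~\ref{ThmA}. A short algebraic manipulation, in the same spirit as the comparison of $\alpha(n,k,H,1)$ with $b(n,k,H)$ carried out before the theorem, gives
$$(n-2)(c+H^2)-\alpha(n,k,H,c)=\frac{(n-2k)^2(c+H^2)}{k(n-2k)+n(k-1)(n-k)}\geq 0,$$
with equality if and only if $n=2k$. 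Hence for $k=\[\frac{n}{2}\]$ the hypothesis of Theorem~\ref{ThmA} is satisfied, and we obtain the dichotomy between its conclusions~(1) and~(2).

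In case~(1), the vanishing statements give $H_q(M;\mathbb{Z})=H_{n-q}(M;\mathbb{Z})=0$ for every $1\leq q\leq\[\frac{n}{2}\]$. Whether $n$ is even or odd, these two ranges of $q$ together cover the entire interval $1\leq q\leq n-1$, so $M$ is an integral homology $n$-sphere. Combined with $\pi_1(M)=0$, the Hurewicz and Whitehead theorems show that $M$ is a homotopy $n$-sphere, and since $n\geq 5$ Smale's generalized Poincar\'e theorem implies that $M$ is homeomorphic to $S^n$.

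In case~(2), $M$ is isometric to the Einstein hypersurface $S^k(\sqrt{(k-1)/(n-2)}\,r)\times S^{n-k}(\sqrt{(n-k-1)/(n-2)}\,r)$ with constant $H$ and $\operatorname{Ric}\equiv\alpha(n,k,H,c)$. Combining this identity with the hypothesis $\operatorname{Ric}\geq(n-2)(c+H^2)$ and the inequality above forces $\alpha(n,k,H,c)=(n-2)(c+H^2)$, and hence $n=2k$ (so $n$ is even). Substituting $n=2k$ yields $\sqrt{(k-1)/(n-2)}=\sqrt{(n-k-1)/(n-2)}=1/\sqrt{2}$, and the formula $r=\sqrt{(n-2)/\alpha(n,k,H,c)}$ of Theorem~\ref{ThmA} reduces to $r=1/\sqrt{c+H^2}$. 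Therefore $M$ is precisely the minimal Clifford hypersurface $S^{k}(r/\sqrt{2})\times S^{k}(r/\sqrt{2})\subset S^{n+1}(r)$, as claimed.

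The hard part lies not in the corollary but in Theorem~\ref{ThmA} itself: all the delicate submanifold-geometric input, in particular the first normal bundle analysis that produces a parallel mean curvature vector field in the rigidity case (highlighted in the paragraph preceding the corollary), is already packaged inside Theorem~\ref{ThmA}. Given Theorem~\ref{ThmA}, the only new ingredient is the elementary identity $(n-2)(c+H^2)-\alpha(n,\[\frac{n}{2}\],H,c)=(n-2k)^2(c+H^2)/[k(n-2k)+n(k-1)(n-k)]$, which simultaneously bridges the two pinching bounds and encodes the parity constraint that singles out the Clifford torus case.
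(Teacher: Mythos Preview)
Your proof is correct and follows essentially the same approach as the paper: apply Theorem~\ref{ThmA} with $k=\[\frac{n}{2}\]$, use the inequality $(n-2)(c+H^2)\geq\alpha(n,\[\frac{n}{2}\],H,c)$ (with equality iff $n=2k$) both to verify the hypothesis and to force the parity constraint in case~(2), and conclude via Hurewicz--Whitehead--Poincar\'e in case~(1). The only cosmetic difference is that you compute the explicit identity $(n-2)(c+H^2)-\alpha(n,k,H,c)=\frac{(n-2k)^2(c+H^2)}{k(n-2k)+n(k-1)(n-k)}$, whereas the paper simply cites Lemma~\ref{alpha}.
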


For the case of odd-dimensional manifold, we can obtain the following rigidity result under a weaker pinched condition, which improves \cite[Theorem 1.4]{XLG14}.
Other rigidity results for odd-dimensional submanifolds in the unit sphere are also obtained in \cite{DJV, DV1, V02}.

\begin{cor}\label{CorB}
Let $f: M^n\rightarrow F^{n+m}(c)$, $n\geq5$, $c\geq0$, be an isometric immersion of a compact, connected, odd-dimensional Riemannian manifold. If the Ricci curvature of $M$ satisfies
$$\operatorname{Ric}\geq\(n-2-\frac{4}{n^3-2n^2-n-2}\)\(c+H^2\),$$
then $M$ is either homeomorphic to the $n$-sphere or isometric to the Einstein hypersurface
$S^k\(\sqrt{\frac{k-1}{n-2}}\,r\)\times S^{n-k}\(\sqrt{\frac{n-k-1}{n-2}}\,r\)$
in the totally umbilical sphere $S^{n+1}\(r\)\subset F^{n+m}(c)$,
where $n=2k+1$ and $r=\sqrt{\frac{n-2}{\alpha(n,k,H,c)}}$.
\end{cor}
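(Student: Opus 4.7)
The plan is to derive this corollary from Theorem \ref{ThmA} by choosing the integer parameter $k = (n-1)/2$, which lies in $[2, \lfloor n/2 \rfloor]$ since $n$ is odd and $n \geq 5$. For this choice I must first check that the pinching bound in the corollary coincides with $\alpha(n,(n-1)/2,H,c)$; after that, Theorem \ref{ThmA} returns two alternatives, the second of which already matches the Einstein-hypersurface conclusion of the corollary, so only the first alternative will require additional work.

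To verify the pinching identity, I set $n = 2k+1$ and compute the two factors in the fraction defining $\alpha$: the numerator $k(n-k)(n-2)$ equals $k(k+1)(2k-1) = 2k^{3}+k^{2}-k$, while the denominator $k(n-2k) + n(k-1)(n-k)$ equals $k + (2k+1)(k^{2}-1) = 2k^{3}+k^{2}-k-1$. Writing the ratio as $1 + 1/(2k^{3}+k^{2}-k-1)$, the coefficient of $(c+H^{2})$ in $\alpha$ simplifies to $n - 2 - 1/(2k^{3}+k^{2}-k-1)$. A short expansion confirms $n^{3}-2n^{2}-n-2 = 4(2k^{3}+k^{2}-k-1)$ for $n = 2k+1$, which transforms the bound into exactly $n-2-4/(n^{3}-2n^{2}-n-2)$, the hypothesis of the corollary. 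Thus the Ricci pinching in the corollary is equivalent to the pinching $(*)$ required by Theorem \ref{ThmA} at the parameter $k = (n-1)/2$.

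In the first alternative of Theorem \ref{ThmA} I obtain $\pi_{1}(M) = 0$ together with $H_{q}(M;\mathbb{Z}) = H_{n-q}(M;\mathbb{Z}) = 0$ for every $1 \leq q \leq k$. The key parity observation is that when $n = 2k+1$ the two intervals $[1,k]$ and $[n-k,n-1] = [k+1,n-1]$ tile all intermediate degrees, so $\widetilde{H}_{q}(M;\mathbb{Z}) = 0$ for all $1 \leq q \leq n-1$. Combined with simple connectedness, $M$ is an integral homology $n$-sphere; by the Hurewicz and Whitehead theorems it is a homotopy $n$-sphere; and by the generalized Poincar\'e conjecture (Smale, $n \geq 5$) it is homeomorphic to $S^{n}$. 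I do not foresee any genuine obstacle here: the whole argument reduces to Theorem \ref{ThmA} combined with a short arithmetic identity, and the only real subtlety is the parity observation that makes all intermediate homology vanish at once.
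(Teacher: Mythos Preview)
Your proposal is correct and follows essentially the same route as the paper: verify that the stated bound equals $\alpha(n,(n-1)/2,H,c)$, apply Theorem~\ref{ThmA} with $k=\lfloor n/2\rfloor$, and in alternative~(1) use the parity observation together with Hurewicz, Whitehead, and the generalized Poincar\'e conjecture to obtain a topological sphere. The only difference is that you spell out the arithmetic identity and the homology-covering argument in full, whereas the paper simply asserts the formula for $\alpha$ and refers to the proof of Corollary~\ref{CorA}.
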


The rest of this paper is organized as follows.
In Section \ref{sec2}, we give some preliminaries and show the core lemma of this paper
(Lemma \ref{Ricci pinching}).
In Section \ref{sec3}, we prove Theorem \ref{ThmA}, Corollary \ref{CorA} and Corollary \ref{CorB}.

\section{Preliminaries}\label{sec2}
Let $f:M^n\rightarrow F^{n+m}(c)$ be an isometric immersion,
and let $\operatorname{II}: TM\times TM\rightarrow NM$ be the second fundamental form.
For an arbitrary fixed point $x\in M$,
let $\{e_1,\cdots,e_n\}$ and $\{\xi_1,\cdots,\xi_m\}$ be orthonormal bases of
$T_xM$ and $N_xM$, respectively.
Write $$\operatorname{II}(e_i, e_j)=\sum_{\alpha=1}^mh_{ij}^\alpha\xi_\alpha\
\mbox{for}\ 1\leq i, j\leq n.$$
For each $1\leq\alpha \leq m$, let $$H_\alpha:=\(h_{ij}^\alpha\)_{n\times n}\ \mbox{and}\ c_\alpha:=\frac1n\operatorname{tr}H_\alpha=\frac1n\sum_{i=1}^nh_{ii}^\alpha.$$
Then the mean curvature vector field is given by
\begin{equation}\label{Hfield}
\mathcal{H}=\frac1n\sum_{i=1}^n\operatorname{II}(e_i, e_i)
=\frac1n\sum_{\alpha=1}^m\sum_{i=1}^nh_{ii}^\alpha\xi_\alpha
=\sum_{\alpha=1}^mc_\alpha\xi_\alpha.
\end{equation}
Let $H=|\mathcal{H}|$ be the mean curvature, and let
$S=|\operatorname{II}|^2$ be the squared length of the second fundamental form. 
Then the Gauss equation implies that
\begin{equation}\label{RicGauss}
\operatorname{Ric}(e_i, e_j)=(n-1)c\delta_{ij}
+\sum_{\alpha=1}^m\bigg(nc_\alpha h_{ij}^\alpha-\sum_{k=1}^nh_{ik}^\alpha h_{jk}^\alpha\bigg),
\end{equation}
\begin{equation}\label{scalarGauss}
\rho=n(n-1)c+n^2H^2-S,
\end{equation}
where $\operatorname{Ric}$ and $\rho$ are the Ricci tensor and the scalar curvature of $M$, respectively.
In addition, we use the notation $\operatorname{Ric}(X):=\operatorname{Ric}(X, X)$
for $X\in TM$ with $|X|=1$.
For each $x\in M$, the subspace $$N_1(x)=\operatorname{Span}\{\operatorname{II}(X,Y): X,Y\in T_xM\}$$
is called the first normal space of $f$ at $x$.
An isometric immersion $f$ is said to be $1$-regular if
the first normal space $N_1(x)$ has constant dimension independent of $x\in M$.
For a $1$-regular isometric immersion $f$, the first normal spaces form a subbundle $N_1$ of the normal bundle $NM$, called the first normal bundle of $f$.

Besides, we also need the following terminology.
A normal vector $\eta$ at $x\in M$ is called a principal normal of $f$ at $x$ if the subspace
$$E_\eta(x):=\left\{X\in T_xM: \operatorname{II}(X,Y)=\<X,Y\>\eta\ \mbox{for each}\ Y\in T_xM \right\}$$
is nontrivial. A smooth normal vector field $\eta$ on $M$ is called a principal normal vector field of $f$ with multiplicity $k>0$ if $E_\eta(x)$ has constant dimension $k$
(and hence is a smooth distribution).
If in addition $\eta$ is parallel (with respect to the normal connection) along $E_\eta$,
then it is called a Dupin principal normal vector field.
According to \cite[Proposition 1.22]{DT19},
every principal normal vector field with multiplicity $k\geq 2$ is a Dupin principal normal vector field.

Next, we will present several lemmas used in the proof of Theorem \ref{ThmA} and Corollary \ref{CorA}.
The following Lemma \ref{homology vanishing} was first proved by Lawson-Simons \cite{LS73}
for the case $c>0$, and then by Xin \cite{Xin84} for the case $c=0$
when the inequality (\ref{Vanishcond}) is strict on the whole submanifold.
Elworthy-Rosenberg \cite{ER96} observed that the result still holds by only requiring
the inequality (\ref{Vanishcond}) is strict at one point.

\begin{lem}\label{homology vanishing}$($\cite{ER96, LS73, Xin84}$)$. 
Let $f: M^n\rightarrow F^{n+m}(c)$, $c\geq0$, be an isometric immersion of a compact manifold.
Suppose that
\begin{equation*}\label{Vanishcond}
\Theta_q:=\sum_{i=1}^q\sum_{j=q+1}^n\(2|\operatorname{II}(e_i,e_j)|^2
-\<\operatorname{II}(e_i,e_i),\operatorname{II}(e_j,e_j)\>\)\leq q(n-q)c\tag{$\#$}
\end{equation*}
holds for any point $x\in M$ and any orthonormal basis $\{e_1,\cdots,e_n\}$ of $T_xM$,
where $q$ is an integer such that $1\leq q\leq n-1$.
If there exists a point of $M$ such that the inequality $($\ref{Vanishcond}$)$ is strict, then
$H_q(M;\mathbb{Z})=H_{n-q}(M;\mathbb{Z})=0$.
\end{lem}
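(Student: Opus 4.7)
The plan is to argue by contradiction via a mass-minimizing integral current in a nontrivial homology class, following the Lawson-Simons sum-of-second-variations method. Suppose $H_q(M;\mathbb{Z})\neq 0$. By Federer-Fleming compactness, there is a nonzero integer-rectifiable $q$-current $T$ that minimizes mass in its free homology class, and its support is, off a singular set of small Hausdorff dimension, a smooth minimal $q$-submanifold $\Sigma\subset M$. For any normal variation $V$ of $\Sigma$ in $F^{n+m}(c)$ that is admissible for mass-minimality within $M$, the second variation $I(V,V)$ must be nonnegative.

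The heart of the argument is to produce a canonical finite family of test variations whose averaged second variation equals $\Theta_q-q(n-q)c$ pointwise on $\Sigma$. For this, embed $F^{n+m}(c)$ isometrically into $\mathbb{R}^N$, with $N=n+m$ if $c=0$ and $N=n+m+1$ if $c>0$, and let $v_1,\ldots,v_N$ be an orthonormal basis of parallel vectors of $\mathbb{R}^N$. For each $a$ define $V_a$ to be the orthogonal projection of $v_a$ onto $(T\Sigma)^\perp\subset TF^{n+m}(c)$, restricted to $\Sigma$. At each regular point $x\in\Sigma$ fix an adapted orthonormal frame $\{e_1,\ldots,e_n\}$ of $T_xM$ with the first $q$ vectors tangent to $\Sigma$. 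Using the Euclidean averaging identity $\sum_a\langle v_a,w_1\rangle\langle v_a,w_2\rangle=\langle w_1,w_2\rangle$ together with the Gauss equation for $\Sigma\subset F^{n+m}(c)$, and decomposing the second fundamental form of $\Sigma$ into its $TM$-component (given by $\operatorname{II}_M(e_i,e_j)$ with $i\le q<j$) and its $NM$-component, all cross terms collapse to yield
\begin{equation*}
\sum_{a=1}^{N}I(V_a,V_a)=\int_{\Sigma}\bigl(\Theta_q-q(n-q)c\bigr)\,d\mathrm{vol}_\Sigma .
\end{equation*}

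By hypothesis (\ref{Vanishcond}), the integrand is everywhere nonpositive, and it remains only to produce a point of $\Sigma$ where it is strictly negative. This is the Elworthy-Rosenberg refinement, and is the main obstacle: one must show that a nontrivial mass-minimizer cannot lie entirely in the closed set where equality holds in (\ref{Vanishcond}). The argument is that the open set where the inequality is strict is nonempty, and a mass-minimizing cycle representing a nontrivial homology class meets every nonempty open subset of $M$, for otherwise an explicit local perturbation would produce a strictly smaller representative in the same class. Once strict negativity on a set of positive measure on $\Sigma$ is achieved, some $I(V_a,V_a)<0$, contradicting the stability of $T$; hence $H_q(M;\mathbb{Z})=0$. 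The vanishing of $H_{n-q}(M;\mathbb{Z})$ follows by re-indexing, since $\Theta_q$ is invariant under the swap of the two index blocks $\{1,\ldots,q\}$ and $\{q+1,\ldots,n\}$, and $q(n-q)=(n-q)q$.

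Two peripheral technicalities remain: (i) regularity of the mass-minimizer for general $q$, handled either by Almgren's big regularity theorem or, more economically, by working at the level of stationary integer-rectifiable varifolds for which the integrated identity above still makes sense and forces a contradiction; and (ii) the bookkeeping of the second variation, where the Weingarten map of $\Sigma$ in $F^{n+m}(c)$ must be split carefully into its $TM$- and $NM$-directions so that the Euclidean-projection trick reproduces $\Theta_q$ exactly, rather than a larger expression involving the full squared norm of $\operatorname{II}_\Sigma$.
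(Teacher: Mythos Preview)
The paper does not give its own proof of this lemma; it is quoted from \cite{LS73, Xin84, ER96}, with the last reference credited for the refinement that strict inequality at a single point suffices. Your outline of the Lawson--Simons/Xin mechanism (mass-minimizing integral current, summed second variations over projections of a Euclidean orthonormal frame yielding $\int_{|T|}(\Theta_q-q(n-q)c)\,d\|T\|$, contradiction with stability when the integrand is negative) is the correct skeleton and matches those references.

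There is, however, a genuine gap at the Elworthy--Rosenberg step. Your assertion that ``a mass-minimizing cycle representing a nontrivial homology class meets every nonempty open subset of $M$'' is simply false. On $M=S^1\times S^2$ with the product metric, the geodesic circle $S^1\times\{p\}$ minimizes mass in its class in $H_1(M;\mathbb{Z})$ yet avoids the open set $S^1\times U$ whenever $p\notin U$; no ``local perturbation'' decreases its length. Thus nothing in your argument rules out the possibility that the support of $T$ lies entirely in the equality set of $(\#)$, in which case $\sum_a I(V_a,V_a)=0$, each $I(V_a,V_a)=0$ by stability, and no contradiction follows. The approach of \cite{ER96} is not the one you describe: it is analytic rather than current-theoretic, proceeding via semigroup domination (equivalently, the Bochner/Weitzenb\"ock formula together with the strong maximum principle) so that under the nonstrict hypothesis a harmonic representative is forced to be parallel, and strict positivity at a single point then forces it to vanish identically. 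If you wish to remain within the current framework, you would instead have to analyze the rigidity implied by $I(V_a,V_a)=0$ for all $a$ along $T$ and extract a contradiction from that; your sketch does not do this.
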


Using Lemma \ref{homology vanishing},
Dajczer-Vlachos \cite{DV2} improved \cite[Lemma 2.2]{XG13} to the following Lemma \ref{first vanishing 1}.
\begin{lem}$($\cite[Lemma 4]{DV2}$)$\label{first vanishing 1}
Let $f: M^n\rightarrow F^{n+m}(c)$, $n\geq4$, $c\geq0$, be an isometric immersion of a compact manifold.
Suppose that the Ricci curvature of $M$ satisfies
\begin{equation}\label{LemXG}
\operatorname{Ric}\geq\frac{n(n-1)}{n+2}\(c+H^2\).
\end{equation}
If there exists a point of $M$ such that the inequality $(\ref{LemXG})$ is strict,
then $\pi_1(M)=0$ and $H_1(M;\mathbb{Z})=H_{n-1}(M;\mathbb{Z})=0$.
\end{lem}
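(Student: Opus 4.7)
The plan is to invoke Lemma~\ref{homology vanishing} with $q = 1$, reducing the homology vanishing to the pointwise Lawson--Simons inequality $\Theta_1(e_1) \leq (n-1)c$ for every unit vector $e_1 \in T_xM$, with strict inequality at at least one point.

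Combining the definition of $\Theta_1$ with the Gauss equation~(\ref{RicGauss}) applied to $e_1$ and completing the square, a direct computation gives
\begin{equation*}
\Theta_1(e_1) - (n-1)c = (n-1)c + \tfrac{n^2}{4}H^2 - \bigl|A - \tfrac{n}{2}\mathcal{H}\bigr|^2 - 2\operatorname{Ric}(e_1),
\end{equation*}
where $A := \operatorname{II}(e_1,e_1)$. For $n = 4$, substituting $2\operatorname{Ric}(e_1) \geq 4(c+H^2)$ together with $|A - \tfrac{n}{2}\mathcal{H}|^2 \geq 0$ immediately yields $\Theta_1 - 3c \leq -c \leq 0$, with strict inequality wherever the Ricci pinching is strict. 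For $n \geq 5$, the same substitution leaves a leftover of order $\tfrac{n(n-2)(n-4)}{4(n+2)}H^2$ when $c$ is small, so the crude square completion does not close the estimate. The refinement I would pursue is to combine the Ricci pinching in every direction simultaneously: choosing $\xi_1 = \mathcal{H}/H$ and coupling the trace constraint $\sum_i h_{ii}^1 = nH$ with the quadratic Ricci inequality at each $e_i$ shows that the configurations on which the naive estimate would fail are infeasible, and in the feasible regime the actual $\operatorname{Ric}(e_1)$ exceeds the pinching lower bound by exactly the amount needed to close $\Theta_1 \leq (n-1)c$ uniformly in $e_1$. Strict inequality propagates from the designated point, so Lemma~\ref{homology vanishing} yields $H_1(M;\mathbb{Z}) = H_{n-1}(M;\mathbb{Z}) = 0$.

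For the upgrade to $\pi_1(M) = 0$, pass to the universal cover $\widetilde M$: the immersion, mean curvature, and Ricci pinching all lift. A Myers-type diameter bound, applied using the strict positivity of the Ricci lower bound propagated from the strict point, makes $\widetilde M$ compact and $\pi_1(M)$ finite; reapplying the Lawson--Simons argument gives $H_1(\widetilde M;\mathbb{Z}) = 0$, forcing $\pi_1(M)$ to be simultaneously finite and perfect, and a standard topological argument using the deck action under the present curvature pinching rules out any nontrivial such group. The main obstacle is the $n \geq 5$ pointwise refinement: the direction-wise Ricci hypothesis must be combined globally via the shape-operator trace constraints, since the Ricci lower bound at a single direction is not tight enough in isolation.
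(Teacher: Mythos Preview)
The paper does not prove this lemma at all; it is quoted verbatim from \cite[Lemma~4]{DV2} and used as a black box. So there is no in-paper argument to compare against, and the question is whether your sketch stands on its own.

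It does not, in two places. First, for $n\geq5$ the pointwise estimate is the entire content of the lemma, and you have not proved it. Your identity
\[
\Theta_1-(n-1)c=(n-1)c+\tfrac{n^2}{4}H^2-\bigl|A-\tfrac{n}{2}\mathcal{H}\bigr|^2-2\operatorname{Ric}(e_1)
\]
is correct and closes nicely when $n=4$, but for $n\geq5$ you only assert that ``coupling the trace constraint with the quadratic Ricci inequality at each $e_i$ shows the bad configurations are infeasible''. That is a description of a hope, not an inequality. The actual argument (going back to \cite{XG13} and sharpened in \cite{DV2}) combines the Ricci bound in the direction $e_1$ with the Ricci bound \emph{summed} over all directions (equivalently, with the resulting upper bound on $S$ via \eqref{scalarGauss}), and it is this specific linear combination that kills the $\tfrac{n(n-2)(n-4)}{4(n+2)}H^2$ overshoot. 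Until that combination is written down and checked, the lemma is unproved for $n\geq5$.

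Second, the $\pi_1$ step is wrong as written. From $H_1(M;\mathbb{Z})=0$ you get that $\pi_1(M)$ is perfect, and Myers (when it applies) gives finiteness, but ``finite and perfect'' does \emph{not} force triviality: the binary icosahedral group is a finite perfect group and is the fundamental group of a closed $3$-manifold. Reapplying Lawson--Simons on the universal cover is vacuous, since $H_1(\widetilde M)=H_{n-1}(\widetilde M)=0$ holds automatically for any simply connected closed oriented manifold. There is no ``standard topological argument using the deck action'' that finishes this; the route in \cite{DV2} is different and you have not supplied it. (There is also a secondary issue: Myers needs a \emph{uniform} positive lower Ricci bound, and when $c=0$ the hypothesis $\operatorname{Ric}\geq\tfrac{n(n-1)}{n+2}H^2$ allows the bound to vanish wherever $H=0$, so ``propagating strict positivity from one point'' is not a valid invocation of Myers.)
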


The following Lemma \ref{alpha} concerns the estimate of $\alpha(n,k,H,c)$,
which will be used several times later.
For $n\geq5$, let $\varphi: [2,\frac{n}{2}]\rightarrow\mathbb{R}$ be the smooth function given by
$$\varphi(s):=\frac{s(n-s)}{s(n-2s)+n(s-1)(n-s)}.$$
Then we have $$\alpha(n,k,H,c)=\big(n-1-(n-2)\varphi(k)\big)\(c+H^2\),$$
and we will only consider the case of $c+H^2>0$ in the following lemma.
\begin{lem}\label{alpha}
The function $\varphi(s)$ is strictly monotone decreasing, and thus
$\alpha(n,k,H,c)$ is strictly monotone increasing for $2\leq k\leq\[\frac{n}{2}\]$.
In addition, we have
\begin{equation}\label{estalpha}
(n-3)\(c+H^2\)<\alpha(n,k,H,c)\leq(n-2)\(c+H^2\)
\end{equation}
for any $2\leq k\leq\[\frac{n}{2}\]$, and the equality holds if and only if $n=2k$.
\end{lem}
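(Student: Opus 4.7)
The plan is to prove the monotonicity first and then obtain the two-sided bound as immediate corollaries. For the first assertion, I would pass to the reciprocal, dividing numerator and denominator through by $s(n-s)$, which yields the tidy decomposition
$$\frac{1}{\varphi(s)}=\frac{n-2s}{n-s}+\frac{n(s-1)}{s}=(n+1)-\frac{s}{n-s}-\frac{n}{s}.$$
Differentiating this form avoids the quotient rule on the original expression and gives
$$\frac{d}{ds}\(\frac{1}{\varphi(s)}\)=n\(\frac{1}{s^2}-\frac{1}{(n-s)^2}\),$$
which is strictly positive on $[2,n/2)$ (since then $0<s<n-s$) and vanishes only at $s=n/2$. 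Hence $1/\varphi$ is strictly increasing on $[2,n/2]$, so $\varphi$ is strictly decreasing there. Because $c+H^2>0$ and $n-2>0$, the identity $\alpha(n,k,H,c)=(n-1-(n-2)\varphi(k))\(c+H^2\)$ then gives the strict monotone increase of $\alpha$ in the discrete parameter $k\in\{2,\ldots,[\frac{n}{2}]\}$.

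For the upper bound in \eqref{estalpha}, I would evaluate $\varphi$ at the natural extremal point $s=n/2$: a direct simplification gives $\varphi(n/2)=\frac{(n/2)^2}{n(n/2-1)(n/2)}=\frac{1}{n-2}$, and hence $\alpha(n,n/2,H,c)=(n-2)\(c+H^2\)$. When $n$ is even, $[\frac{n}{2}]=n/2$ and the bound is attained with equality; when $n$ is odd, $[\frac{n}{2}]<n/2$ and the strict monotonicity of $\varphi$ forces $\alpha(n,[\frac{n}{2}],H,c)<(n-2)\(c+H^2\)$. This matches exactly the equality case $n=2k$.

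For the lower bound, monotonicity reduces everything to computing $\alpha(n,2,H,c)$. A short calculation gives $\varphi(2)=\frac{2(n-2)}{n^2-8}$, so
$$\alpha(n,2,H,c)=\frac{(n-1)(n^2-8)-2(n-2)^2}{n^2-8}\(c+H^2\)=\frac{n^2(n-3)}{n^2-8}\(c+H^2\).$$
For $n\geq 5$ we have $n^2>n^2-8>0$, so this is strictly greater than $(n-3)\(c+H^2\)$, and the same inequality then holds for every $2\leq k\leq[\frac{n}{2}]$ by monotonicity.

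The entire argument is elementary; the only mild subtlety is aligning the even and odd cases of $n$ with the floor $[\frac{n}{2}]$ in the upper-bound step, which I would handle by first treating $\varphi$ as a smooth function on the continuous interval $[2,n/2]$ and then restricting to integer $k$. I do not foresee any genuine obstacle.
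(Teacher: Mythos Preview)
Your argument is correct and follows essentially the same route as the paper: establish that $\varphi$ is strictly decreasing via a derivative computation, then read off the bounds by evaluating at the endpoints $s=2$ and $s=n/2$. The only cosmetic differences are that you differentiate the reciprocal $1/\varphi(s)=(n+1)-\frac{s}{n-s}-\frac{n}{s}$ rather than $\varphi$ itself, and that you simplify $\alpha(n,2,H,c)$ all the way to $\frac{n^2(n-3)}{n^2-8}(c+H^2)$, whereas the paper stops at $(n-1-\frac{2(n-2)^2}{n^2-8})(c+H^2)$ and compares $(n-2)^2$ with $n^2-8$; both are immediate and neither buys anything substantive over the other.
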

\begin{proof}
A direct calculation shows that $$\varphi'(s)=\frac{-n^2(n-2s)}{(s(n-2s)+n(s-1)(n-s))^2}<0$$
for $s\in[2,\frac{n}{2})$.
It follows that $\varphi(s)$ is strictly monotone decreasing, and thus
$\alpha(n,k,H,c)$ is strictly monotone increasing for $2\leq k\leq\[\frac{n}{2}\]$.
In addition, we have
\begin{equation}\label{estalpha1}
\alpha(n,k,H,c)\leq\big(n-1-(n-2)\varphi(n/2)\big)\(c+H^2\)=(n-2)\(c+H^2\),
\end{equation}
\begin{equation}\label{estalpha2}
\begin{aligned}
\alpha(n,k,H,c)&\geq\big(n-1-(n-2)\varphi(2)\big)\(c+H^2\)=\big(n-1-\frac{2(n-2)^2}{n^2-8}\big)\(c+H^2\)\\
&>(n-3)\(c+H^2\),
\end{aligned}
\end{equation}
where the last inequality in (\ref{estalpha2}) follows from the fact that $(n-2)^2<n^2-8$ for $n\geq5$.
It is easy to see that the equality holds in (\ref{estalpha1}) if and only if $n=2k$.
\end{proof}

In the study of the first normal bundle $N_1$,
we need the following Lemma \ref{N1para} to show that $N_1$ is parallel.
\begin{lem}\label{N1para}$($\cite[Lemma 5]{RT84}$)$
Let $f:M^n\rightarrow F^{n+m}(c)$ be a $1$-regular isometric immersion. If for every point of $M$, there exists an orthonormal frame $\{E_1,\cdots, E_n\}$ such that for each $1\leq i\leq n$,
$\operatorname{II}(E_i,E_j)=0$ for any $j\neq i$ and
$\operatorname{II}(E_i,E_i)=\sum_{j\neq i}a_j\operatorname{II}(E_j,E_j)$ for some real numbers $a_j$'s,
then the first normal bundle $N_1$ is parallel.
\end{lem}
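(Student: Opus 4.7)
The goal is to show $\nabla^\perp_X \xi \in N_1$ for every smooth local section $\xi$ of the first normal bundle and every tangent vector $X$. Writing $V_i := \operatorname{II}(E_i, E_i)$, the $1$-regular hypothesis makes $N_1$ a smooth subbundle spanned (fiberwise) by the $V_i$'s, so after passing to a smooth local extension of the frame $\{E_1,\ldots,E_n\}$ it suffices to prove $\nabla^\perp_{E_k} V_i \in N_1$ for all indices $i, k$. The principal tool is the Codazzi equation, which in the constant-curvature ambient $F^{n+m}(c)$ reads $(\nabla^\perp_X \operatorname{II})(Y, Z) = (\nabla^\perp_Y \operatorname{II})(X, Z)$, where $(\nabla^\perp_X \operatorname{II})(Y,Z) := \nabla^\perp_X \operatorname{II}(Y,Z) - \operatorname{II}(\nabla_X Y, Z) - \operatorname{II}(Y, \nabla_X Z)$.

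The off-diagonal case $k \neq i$ I would handle by applying Codazzi to $X = E_k$, $Y = Z = E_i$. On the right-hand side $(\nabla^\perp_{E_i} \operatorname{II})(E_k, E_i)$, the leading term $\nabla^\perp_{E_i} \operatorname{II}(E_k, E_i)$ vanishes thanks to the diagonality $\operatorname{II}(E_k, E_i) = 0$, while the two correction terms $\operatorname{II}(\nabla_{E_i} E_k, E_i)$ and $\operatorname{II}(E_k, \nabla_{E_i} E_i)$ collapse, by diagonality again, to Christoffel-symbol multiples of $V_i$ and $V_k$ respectively. The left-hand side reduces to $\nabla^\perp_{E_k} V_i$ minus a multiple of $V_i$, so rearrangement yields $\nabla^\perp_{E_k} V_i \in \operatorname{Span}\{V_i, V_k\} \subset N_1$. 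For the diagonal case $k=i$, I would exploit the hypothesized linear dependence $V_i = \sum_{j\neq i} a_j V_j$: differentiating along $E_i$ gives $\nabla^\perp_{E_i} V_i = \sum_{j\neq i} (E_i\, a_j)\, V_j + \sum_{j\neq i} a_j\, \nabla^\perp_{E_i} V_j$, whose first sum lies in $N_1$ by construction and whose second sum lies in $N_1$ by the off-diagonal step just established (applied with the roles of $i$ and $j$ swapped).

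Combining both cases and expanding an arbitrary $X = \sum_k X^k E_k$, one obtains $\nabla^\perp_X V_i \in N_1$ for every $i$, which is exactly the assertion that $N_1$ is a parallel subbundle. The main obstacle I anticipate is a regularity issue: the hypothesis guarantees the adapted frame $\{E_i\}$ only pointwise, yet the argument above needs $\{E_i\}$ — and hence $V_i$ and the coefficients $a_j$ — to be smooth on a neighborhood so that the Codazzi manipulation and the differentiation of the linear relation make sense. I would resolve this by combining the smoothness of $N_1$ with a local rank/multiplicity stratification argument to select the frame smoothly on a dense open set, and then extend the parallelism of $N_1$ to the full manifold by continuity, using that the pointwise condition $\nabla^\perp_X V \in N_1$ is closed.
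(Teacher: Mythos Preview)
The paper does not give its own proof of this lemma; it is quoted from Rodr\'{\i}guez--Tribuzy \cite{RT84} and invoked as a black box in Case~(2-b) of the proof of Theorem~\ref{ThmA}. There is therefore nothing in the paper to compare your argument against.

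On its merits, your outline is the standard one and is correct once the frame is smooth. The Codazzi computation for $k\neq i$ indeed yields $\nabla^\perp_{E_k}V_i\in\operatorname{Span}\{V_i,V_k\}\subset N_1$ exactly as you say, and the diagonal case then follows from the linear relation $V_i=\sum_{j\neq i}a_jV_j$ together with the off-diagonal step. One small refinement: once the frame is smooth you do not need the $a_j$ to be given smoothly in advance, because the hypothesis forces $N_1=\operatorname{Span}\{V_j:j\neq i\}$ for every $i$, and since this span has constant rank by $1$-regularity you can locally pick a subset of the $V_j$ with $j\neq i$ as a smooth basis and express $V_i$ in it with smooth coefficients. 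The regularity caveat you flag---that the diagonalizing frame is a priori only pointwise---is a genuine subtlety of the lemma as stated; your proposed resolution (smooth frame on a dense open set, then close up by continuity of the condition $\nabla^\perp N_1\subset N_1$) is the right idea. In the paper's sole application the issue does not arise anyway, since the frame there is adapted to the smooth eigendistributions $E_{\eta_1}$ and $E_{\eta_2}$ and is therefore smooth by construction.
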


Finally, we state and prove the core lemma of this paper
which is based on the proof of \cite[Lemma 4.1]{XLG14}.
\begin{lem}\label{Ricci pinching}
Let $f:M^n\rightarrow F^{n+m}(c)$, $n\geq 5$, be an isometric immersion satisfying \begin{equation*}\label{Pinchcond}
\operatorname{Ric}\geq\alpha(n,k,H,c)
=\bigg(n-1-\dfrac{k(n-k)(n-2)}{k(n-2k)+n(k-1)(n-k)}\bigg)\(c+H^2\)\tag{$*$}
\end{equation*}
at a point $x\in M$ for an integer $k$ such that $2\leq k\leq\[\frac{n}{2}\]$.
\begin{itemize}
\item[(1)] The inequality $($\ref{Vanishcond}$)$ is satisfied for $2\leq q\leq k$ at $x$
and is strict for $2\leq q<k$ if $c+H^2>0$ at $x$.
Moreover, if the inequality $($\ref{Pinchcond}$)$ is strict at $x$,
then the inequality $($\ref{Vanishcond}$)$ is strict for $q=k$ at $x$.
\item[(2)] Suppose that the equality holds in $($\ref{Vanishcond}$)$ with $q=k$
for an orthonormal basis $\{e_1,\cdots,e_n\}$ of $T_xM$.
Then for any orthonormal basis $\{\xi_1,\cdots,\xi_m\}$ of $N_xM$, the corresponding matrix
$$H_\alpha=\operatorname{diag}(\lambda_\alpha I_k,\mu_\alpha I_{n-k})\
\mbox{for each}\ 1\leq\alpha\leq m,$$
where $\lambda_\alpha$ and $\mu_\alpha$ are principal curvatures with respect to $\xi_\alpha$
of multiplicities $k$ and $n-k$, respectively.
Furthermore, if $c+H^2>0$ at $x$, then
$\eta_1=\sum_{\alpha=1}^m\lambda_\alpha\xi_\alpha$ and $\eta_2=\sum_{\alpha=1}^m\mu_\alpha\xi_\alpha$
are distinct principal normals of $f$ at $x$ such that $E_{\eta_1}(x)=\operatorname{Span}\{e_1,\cdots,e_k\}$ and $E_{\eta_2}(x)=\operatorname{Span}\{e_{k+1},\cdots,e_n\}$.
In addition, $$\operatorname{Ric}(e_i,e_j)=\delta_{ij}\alpha(n,k,H,c)\ \mbox{for any}\ 1\leq i,j\leq n.$$
\end{itemize}
\end{lem}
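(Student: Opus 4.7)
The plan is to deduce $(\#)$ from $(*)$ by combining the Gauss equation with sharp Cauchy--Schwarz bounds on each of two diagonal blocks of the shape operators, and then to extract the block structure in (2) by reading off the equality cases.

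Fix any orthonormal frame $\{e_1,\ldots,e_n\}$ of $T_xM$ and any normal frame $\{\xi_1,\ldots,\xi_m\}$, and split the indices into $I=\{1,\ldots,q\}$ and $J=\{q+1,\ldots,n\}$. Set $A_\alpha=\sum_{i\in I}h_{ii}^\alpha$, $B_\alpha=nc_\alpha-A_\alpha$, $P_\alpha=\sum_{i,j\in I}(h_{ij}^\alpha)^2$, $Q_\alpha=\sum_{i,j\in J}(h_{ij}^\alpha)^2$, and $R_\alpha=\sum_{i\in I,\,j\in J}(h_{ij}^\alpha)^2$. Summing (\ref{RicGauss}) over $I$ yields
\begin{equation*}
\sum_{i\in I}\operatorname{Ric}(e_i)=q(n-1)c+n\sum_\alpha c_\alpha A_\alpha-\sum_\alpha(P_\alpha+R_\alpha),
\end{equation*}
and a short manipulation gives $\Theta_q=\sum_\alpha[\,2R_\alpha+A_\alpha^2-nc_\alpha A_\alpha\,]$. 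Coupling the Ricci lower bound on $I$ with the symmetric bound on $J$, and applying the sharp Cauchy--Schwarz inequalities $A_\alpha^2\leq qP_\alpha$ and $B_\alpha^2\leq(n-q)Q_\alpha$ together with the identity $\sum_\alpha c_\alpha^2=H^2$, should produce an inequality whose right-hand side collapses exactly to $q(n-q)c$ when the pinching constant is $\alpha(n,k,H,c)$ and $q=k$. The specific rational form of $\varphi(k)$---reverse-engineered so that the Einstein torus saturates the Ricci pinching---is what makes this balancing close, and the strict monotonicity of $\alpha$ in $k$ from Lemma \ref{alpha} then provides strictness of $(\#)$ for $q<k$ (and strictness for $q=k$ whenever $(*)$ is strict).

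For part (2), equality in $(\#)$ at $q=k$ forces equality in every intermediate estimate. Equality in $A_\alpha^2=kP_\alpha$ forces $h_{11}^\alpha=\cdots=h_{kk}^\alpha=\lambda_\alpha$ and $h_{ij}^\alpha=0$ for distinct $i,j\in I$; likewise the $J$-block becomes $\mu_\alpha I_{n-k}$; and the off-block entries $h_{ij}^\alpha$ ($i\in I$, $j\in J$) must vanish from the tight pairing in the combined bound. Hence $H_\alpha=\operatorname{diag}(\lambda_\alpha I_k,\mu_\alpha I_{n-k})$ in every normal frame. Setting $\eta_1=\sum_\alpha\lambda_\alpha\xi_\alpha$ and $\eta_2=\sum_\alpha\mu_\alpha\xi_\alpha$ gives $\operatorname{II}(X,Y)=\langle X,Y\rangle\eta_\ell$ on the corresponding eigenspace, identifying $\eta_\ell$ as a principal normal with the claimed multiplicity. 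Equality in the Ricci sum yields $\operatorname{Ric}(e_i,e_i)=\alpha$, and the block-diagonal form of every $H_\alpha$ forces $\operatorname{Ric}(e_i,e_j)=0$ for $i\neq j$ via (\ref{RicGauss}). Finally, if $c+H^2>0$ and $\eta_1=\eta_2$, then $f$ would be umbilic at $x$ with $\operatorname{Ric}=(n-1)(c+H^2)$, contradicting the strict upper bound $\alpha<(n-1)(c+H^2)$ implicit in Lemma \ref{alpha}.

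The main obstacle is confirming that the weighted combination of the two Ricci sums, the Cauchy--Schwarz bounds, and the identity $\sum_\alpha c_\alpha^2=H^2$ really closes at the precise constant $\alpha(n,k,H,c)$. The delicate point is that the bound must be tight exactly at $q=k$ and not earlier, which means the algebraic weights implicit in $\varphi(k)$ have to align so that $(\#)$ is strict for $q<k$ but allows equality at $q=k$; tracking which estimates are saturated is what then powers the rigidity extracted in part (2).
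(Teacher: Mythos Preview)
Your overall architecture is right and matches the paper: express $\Theta_q$ via the Gauss equation, use the Cauchy--Schwarz bound on block diagonals, feed in the Ricci lower bound, and then read off the block structure of the $H_\alpha$ from the equality cases. Your equality analysis in part (2) is also correct and essentially identical to the paper's.

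The gap is in part (1). What you call ``the weighted combination of the two Ricci sums'' corresponds to taking a convex combination of the $I$-sum and the $J$-sum of (\ref{RicGauss}); with the Cauchy--Schwarz bounds $A_\alpha^2\le q\sum_{i\in I}(h_{ii}^\alpha)^2$ and $B_\alpha^2\le(n-q)\sum_{j\in J}(h_{jj}^\alpha)^2$ this leads (as in the paper's estimate (\ref{scaling2})) to
\[
\Theta_q\;\le\;q(n-q)\big((n-1)(c+H^2)-\operatorname{Ric}_{\min}\big)-q(n-q)H^2-(n-2q)\sum_{\alpha}\sum_{i\in I}c_\alpha\big(h_{ii}^\alpha-c_\alpha\big).
\]
The last sum has no sign, so for $q<\tfrac n2$ this does \emph{not} close at the constant $\alpha(n,q,H,c)$, and no reweighting of the $I$- and $J$-Ricci sums alone removes it. The paper's key maneuver is to produce a \emph{second} inequality using only the $I$-block (their (\ref{scaling1})), obtained by writing $\Theta_q=\sum_\alpha[2R_\alpha+A_\alpha^2-nc_\alpha A_\alpha]$, applying $A_\alpha^2\le q\sum_{i\in I}(h_{ii}^\alpha)^2$ and then dominating by $q\sum_{i\in I}\big((n-1)c-\operatorname{Ric}(e_i)\big)$; this yields
\[
\Theta_q\;\le\;q^2\big((n-1)(c+H^2)-\operatorname{Ric}_{\min}\big)-q(n-q)H^2+n(q-1)\sum_{\alpha}\sum_{i\in I}c_\alpha\big(h_{ii}^\alpha-c_\alpha\big),
\]
where the troublesome cross term now appears with the \emph{opposite} sign. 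Taking the specific convex combination with weights $\tfrac{n-2q}{q(n-2)}$ and $\tfrac{n(q-1)}{q(n-2)}$ cancels the cross term exactly and produces the coefficient $\tfrac{q(n-2q)+n(q-1)(n-q)}{n-2}$, which is precisely what makes the bound collapse to $q(n-q)c$ at $\operatorname{Ric}_{\min}=\alpha(n,q,H,c)$ and explains the otherwise mysterious denominator in $\varphi$. Without this second, $I$-only estimate your argument will not reach the sharp constant; once you add it, the rest of your outline (including the strictness for $q<k$ via Lemma~\ref{alpha} and the rigidity in (2)) goes through exactly as you describe.
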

\begin{proof}
To prove (1), we will estimate $\Theta_q$ for any $2\leq q\leq k$.
Let $\{e_1,\cdots,e_n\}$ and $\{\xi_1,\cdots,\xi_m\}$ be orthonormal bases of
$T_xM$ and $N_xM$, respectively.
It follows from (\ref{Hfield}-\ref{RicGauss}) that $\sum\limits_{\alpha=1}^m c_\alpha^2=H^2$ and
\begin{equation}\label{RicGauss1}
\operatorname{Ric}(e_i)=(n-1)c+\sum_{\alpha=1}^m\bigg(nc_\alpha h^\alpha_{ii}
-\sum_{j=1}^n(h^\alpha_{ij})^2\bigg).
\end{equation}
On one hand, we have
\begin{equation}\label{scaling1}
\begin{aligned}
\Theta_q&=2\sum_{\alpha=1}^m\sum_{j=q+1}^n\sum_{i=1}^q(h_{ij}^\alpha)^2
-\sum_{\alpha=1}^m\sum_{j=q+1}^n\sum_{i=1}^qh_{ii}^\alpha h_{jj}^\alpha\\
&=\sum_{\alpha=1}^m\bigg(2\sum_{j=q+1}^n\sum_{i=1}^q(h_{ij}^\alpha)^2-\big(\sum_{i=1}^qh_{ii}^\alpha\big)\big(nc_\alpha-\sum_{i=1}^qh_{ii}^\alpha\big)\bigg)\\
&\leq\sum_{\alpha=1}^m\bigg(2\sum_{j=q+1}^n\sum_{i=1}^q(h_{ij}^\alpha)^2-nc_\alpha\sum_{i=1}^qh_{ii}^\alpha+q\sum_{i=1}^q(h_{ii}^\alpha)^2\bigg)\\
&\leq q\sum_{i=1}^q\big((n-1)c-\operatorname{Ric}(e_i)\big)+n(q-1)\sum_{\alpha=1}^m\sum_{i=1}^qc_\alpha h_{ii}^\alpha\\
&\leq q^2\big((n-1)\(c+H^2\)-\operatorname{Ric}_{\min}(x)\big)\\
&\quad\,-q(n-q)H^2+n(q-1)\sum_{\alpha=1}^m\sum_{i=1}^qc_\alpha(h_{ii}^\alpha-c_\alpha),
\end{aligned}
\end{equation}
where $\operatorname{Ric}_{\min}(x):=\min\left\{\operatorname{Ric}(X): X\in T_xM, |X|=1\right\}$.
On the other hand, we have

\begin{equation}\label{scaling2}
\begin{aligned}
\Theta_q
&=\sum_{\alpha=1}^m\bigg(2\sum_{j=q+1}^n\sum_{i=1}^q(h_{ij}^\alpha)^2-\frac{n-q}{n}\big(\sum_{i=1}^qh_{ii}^\alpha\big)\big(nc_\alpha-\sum_{i=1}^qh_{ii}^\alpha\big)\\
&\quad\quad\quad\quad\quad\quad\quad\quad\quad\quad\ \ -\frac{q}{n}\big(\sum_{j=q+1}^nh_{jj}^\alpha\big)\big(nc_\alpha-\sum_{j=q+1}^nh_{jj}^\alpha\big)\bigg)\\
&\leq\sum_{\alpha=1}^m\bigg(2\sum_{j=q+1}^n\sum_{i=1}^q(h_{ij}^\alpha)^2-(n-q)c_\alpha\sum_{i=1}^qh_{ii}^\alpha+\frac{q(n-q)}{n}\sum_{i=1}^q(h_{ii}^\alpha)^2\\
&\quad\quad\quad\quad\quad\quad\quad\quad\quad\quad\ \ -qc_\alpha\sum_{j=q+1}^nh_{jj}^\alpha+\frac{q(n-q)}{n}\sum_{j=q+1}^n(h_{jj}^\alpha)^2\bigg)\\
&\leq\frac{q(n-q)}{n}S-\sum_{\alpha=1}^m\bigg(qnc_\alpha^2+(n-2q)c_\alpha\sum_{i=1}^qh_{ii}^\alpha\bigg)\\
&\leq q(n-q)\bigg((n-1)\(c+H^2\)-\operatorname{Ric}_{\min}(x)\bigg)\\
&\quad\,-q(n-q)H^2-(n-2q)\sum_{\alpha=1}^m\sum_{i=1}^qc_\alpha(h_{ii}^\alpha-c_\alpha).
\end{aligned}
\end{equation}
From (\ref{scaling1}-\ref{scaling2}), the assumption (\ref{Pinchcond}) and Lemma \ref{alpha}, we obtain
\begin{equation}\label{scaling}
\begin{aligned}
\Theta_q&\leq\frac{n-2q}{q(n-2)}\bigg(q^2\big((n-1)\(c+H^2\)-\operatorname{Ric}_{\min}(x)\big)\\
&\qquad\qquad\quad\ \ -q(n-q)H^2+n(q-1)\sum_{\alpha=1}^m\sum_{i=1}^qc_\alpha(h_{ii}^\alpha-c_\alpha)\bigg)\\
&\quad+\frac{n(q-1)}{q(n-2)}\bigg(q(n-q)\big((n-1)\(c+H^2\)-\operatorname{Ric}_{\min}(x)\big)\\
&\qquad\qquad\qquad\ \, -q(n-q)H^2-(n-2q)\sum_{\alpha=1}^m\sum_{i=1}^qc_\alpha(h_{ii}^\alpha-c_\alpha)\bigg)\\
&=\frac{q(n-2q)+n(q-1)(n-q)}{n-2}\big((n-1)\(c+H^2\)-\operatorname{Ric}_{\min}(x)\big)\\
&\qquad\qquad\qquad\qquad\qquad\qquad\qquad\qquad\qquad\qquad\quad\ \ -q(n-q)H^2\\
&\leq\frac{\big(q(n-2q)+n(q-1)(n-q)\big)k(n-k)}{k(n-2k)+n(k-1)(n-k)}\(c+H^2\)-q(n-q)H^2\\
&\leq\frac{\big(q(n-2q)+n(q-1)(n-q)\big)q(n-q)}{q(n-2q)+n(q-1)(n-q)}\(c+H^2\)-q(n-q)H^2\\
&=q(n-q)c
\end{aligned}
\end{equation}
for any $2\leq q\leq k$.
Note that the last inequality in (\ref{scaling}) is strict for $2\leq q<k$ if $c+H^2>0$ at $x$.
Hence the inequality (\ref{Vanishcond}) is satisfied for $2\leq q\leq k$ at $x$
and is strict for $2\leq q<k$ if $c+H^2>0$ at $x$.
If in addition the inequality (\ref{Pinchcond}) is strict at $x$, then the second inequality in (\ref{scaling})
is strict, and thus the inequality (\ref{Vanishcond}) is strict for $q=k$ at $x$.

Next, we prove $(2)$. Assume that the equality holds in (\ref{Vanishcond}) with $q=k$ for an orthonormal basis $\{e_1,\cdots,e_n\}$ of $T_xM$.
Then all inequalities in (\ref{scaling1}-\ref{scaling}) become equalities.
The first inequality in (\ref{scaling2}) implies that for each $1\leq\alpha\leq m$, there exist
$\lambda_\alpha, \mu_\alpha\in\mathbb{R}$ such that
\begin{equation}\label{eqcondition1}
h_{ii}^\alpha=\lambda_\alpha\ \mbox{for any}\ 1\leq i\leq k,
\end{equation}
\begin{equation}\label{eqcondition2}
h_{jj}^\alpha=\mu_\alpha\ \mbox{for any}\ k+1\leq j\leq n,
\end{equation}
and the second inequality in (\ref{scaling2}) implies that for each $1\leq\alpha\leq m$,
\begin{equation}\label{eqcondition3}
\bigg(\frac{k(n-k)}{n}-1\bigg)h_{ij}^\alpha=0\ \mbox{for any}\ 1\leq i\leq k,\ k+1\leq j\leq n,
\end{equation}
\begin{equation}\label{eqcondition4}
h_{ii'}^\alpha=h_{jj'}^\alpha=0\ \mbox{for any}\ 1\leq i\neq i'\leq k,\ k+1\leq j\neq j'\leq n.
\end{equation}
Since $2\leq k\leq\[\frac{n}{2}\]$, we have $\frac{k(n-k)}{n}>1$ for $n\geq5$,
and thus it follows from (\ref{eqcondition1}-\ref{eqcondition4}) that
$$H_\alpha=\operatorname{diag}(\lambda_\alpha I_k,\mu_\alpha I_{n-k})\
\mbox{for each}\ 1\leq\alpha\leq m,$$
Then by (\ref{RicGauss}), we have
$$\operatorname{Ric}(e_i,e_j)=0\ \mbox{for any}\ 1\leq i\neq j\leq n.$$
In addition, combining the last inequality in (\ref{scaling2}) and the second inequality in (\ref{scaling}), we obtain
\begin{equation}\label{Ric(ei)}
\operatorname{Ric}(e_i)=\alpha(n,k,H,c)\ \mbox{for each}\ 1\leq i\leq n.
\end{equation}
Furthermore, we consider the normal vectors
$\eta_1=\sum_{\alpha=1}^m\lambda_\alpha\xi_\alpha$ and $\eta_2=\sum_{\alpha=1}^m\mu_\alpha\xi_\alpha$.
If $\eta_1=\eta_2$, then by (\ref{RicGauss}), we have
$\operatorname{Ric}(e_i)=(n-1)\(c+H^2\)$ for each $1\leq i\leq n$,
which contradicts (\ref{Ric(ei)}) and Lemma \ref{alpha} if $c+H^2>0$ at $x$.
Hence, $\eta_1\neq\eta_2$ and
for any tangent vectors $X=\sum_{i=1}^nx_ie_i, Y=\sum_{i=1}^ny_ie_i\in T_xM$, we have
$$\begin{aligned}
\operatorname{II}(X, Y)&=\sum_{\alpha=1}^m\sum_{i,j=1}^nx_iy_jh_{ij}^\alpha\xi^\alpha
=\sum_{\alpha=1}^m\Big(\sum_{i=1}^kx_iy_i\lambda_\alpha\xi^\alpha
+\sum_{j=k+1}^nx_jy_j\mu_\alpha\xi^\alpha\Big)\\
&=\Big(\sum_{i=1}^kx_iy_i\Big)\eta_1+\Big(\sum_{j=k+1}^nx_jy_j\Big)\eta_2.
\end{aligned}$$
It follows that $\eta_1$, $\eta_2$ are principal normals of $f$ at $x$ such that
$E_{\eta_1}(x)=\operatorname{Span}\{e_1,\cdots,e_k\}$ and $E_{\eta_2}(x)=\operatorname{Span}\{e_{k+1},\cdots,e_n\}$.
\end{proof}

\section{Proof of Theorems}\label{sec3}
In this section, we will prove Theorem \ref{ThmA}, Corollary \ref{CorA} and Corollary \ref{CorB}.

\subsection{Proof of Theorem \ref{ThmA}}
\begin{proof}
First, we will prove $\pi_1(M)=0$ and $H_1(M;\mathbb{Z})=H_{n-1}(M;\mathbb{Z})=0$
by Lemma \ref{first vanishing 1}.
The first line of $(\ref{estalpha2})$ yields that
\begin{equation}\label{estim1}
\alpha(n,k,H,c)\geq\bigg(n-1-\frac{2(n-2)^2}{n^2-8}\bigg)\(c+H^2\)\geq\frac{n(n-1)}{n+2}\(c+H^2\),
\end{equation}
where the last inequality follows from the fact that $n-1-\frac{2(n-2)^2}{n^2-8}>\frac{n(n-1)}{n+2}$
for $n\geq5$.
Then by the assumption $\operatorname{Ric}\geq\alpha(n,k,H,c)$,
we only need to show that there exists a point of $M$ such that the inequality $(\ref{LemXG})$ is strict.
Assume for the sake of contradiction that
$\operatorname{Ric}=\frac{n(n-1)}{n+2}\(c+H^2\)$ on $M$.
Then the last inequality in (\ref{estim1}) implies that $c=H=0$,
which contradicts the compactness of $M$.

Next, it follows immediately from Lemma \ref{Ricci pinching} (1) that
the inequality (\ref{Vanishcond}) is satisfied for $2\leq q\leq k$ at every point of $M$ and strict for $2\leq p<k$ at points with $c+H^2>0$.
Then Lemma \ref{homology vanishing} implies that
$$H_q(M;\mathbb{Z})=H_{n-q}(M;\mathbb{Z})=0\ \mbox{for each}\ 2\leq q\leq k-1.$$
For $H_k(M;\mathbb{Z})$ and $H_{n-k}(M;\mathbb{Z})$,
we divide the discussion into the following two cases.

Case (1): $H_k(M;\mathbb{Z})=H_{n-k}(M;\mathbb{Z})=0$.
In this case, if $k=\[\frac{n}2\]$, then we know that $M$ is a homology sphere.
And if $k<\[\frac{n}2\]$, then by the universal coefficient theorem,
$H_{k}(M;\mathbb{Z})=0$ implies that $H^{k+1}(M;\mathbb{Z})$ is torsion-free,
and thus $H_{n-k-1}(M;\mathbb{Z})$ is also torsion-free by the Poincar\'{e} duality theorem.

Case (2): $H_k(M;\mathbb{Z})=H_{n-k}(M;\mathbb{Z})=0$ does not hold.
It follows from Lemma \ref{homology vanishing} that for any point $x\in M$,
there exists an orthonormal basis $\{e_1,\cdots,e_n\}$ of $T_xM$ such that 
the equality holds in (\ref{Vanishcond}) for $q=k$.
Then by Lemma \ref{Ricci pinching} (2), for any smooth orthonormal local frame $\(\xi_1,\cdots,\xi_m\)$ for $NM$ over an open subset $U$ of $M$, 
the corresponding matrix
\begin{equation}\label{shapeop}
H_\alpha=\operatorname{diag}(\lambda_\alpha I_k,\mu_\alpha I_{n-k})\
\mbox{for each}\ 1\leq\alpha\leq m,
\end{equation}
where $\lambda_\alpha$ and $\mu_\alpha$ are principal curvatures with respect to $\xi_\alpha$
of multiplicities $k$ and $n-k$, respectively.
It follows that the normal bundle $NM$ is flat.
In addition, by Lemma \ref{Ricci pinching} (2), we have
\begin{equation}\label{Einstein}
\operatorname{Ric}(e_i,e_j)=\delta_{ij}\alpha(n,k,H,c)\ \mbox{for any}\ 1\leq i,j\leq n.
\end{equation}
Since $M$ is connected and $n\geq5$, by Schur's lemma,
we obtain that $M$ is an Einstein manifold and $\alpha(n,k,H,c)$ is constant.
It follows that the mean curvature $H$ is also constant, and thus $c+H^2>0$ at every point of $M$.
Note that the expressions of the principal normals
$$\eta_1=\sum_{\alpha=1}^m\lambda_\alpha\xi_\alpha\ \mbox{and}\ \eta_2=\sum_{\alpha=1}^m\mu_\alpha\xi_\alpha$$
are independent of the choice of the smooth orthonormal local frame $\(\xi_1,\cdots,\xi_m\)$ for $NM$.
Hence, there are two global normal vector fields, still denoted by $\eta_1$ and $\eta_2$,
which are distinct principal normals of $f$ at every point.

In order to obtain the rigidity result by \cite[Theorem 1]{Onti18},
we need to prove that the mean curvature vector field $\mathcal{H}$ is parallel.
For the case $H>0$, we can assume without loss of generality that $\xi_1=\mathcal{H}/{H}$.
Then we have $\operatorname{tr}H_1=nH$ and $\operatorname{tr}H_\alpha=0$ for each $2\leq\alpha\leq m$.
It follows from (\ref{shapeop}) that
\begin{equation}\label{eqtrH1}
k\lambda_1+(n-k)\mu_1=nH,
\end{equation}
\begin{equation}\label{eqtrH2}
k\lambda_\alpha+(n-k)\mu_\alpha=0
\end{equation}
for each $2\leq\alpha\leq m$.
By (\ref{RicGauss}) and (\ref{Einstein}), we have
\begin{equation}\label{RicGauss1}
\alpha(n,k,H,c)=(n-1)c+nH\lambda_1-\lambda_1^2-\sum_{\alpha=2}^m\lambda_\alpha^2,
\end{equation}
\begin{equation}\label{RicGauss2}
\alpha(n,k,H,c)=(n-1)c+nH\mu_1-\mu_1^2-\sum_{\alpha=2}^m\mu_\alpha^2.
\end{equation}
Combining (\ref{eqtrH1}-\ref{RicGauss2}), one can verify by a direct calculation that
\begin{equation}\label{Prin1}
\lambda_1=H+(n-2k)\frac{\varphi(k)\(c+H^2\)}{kH},
\end{equation}
\begin{equation}\label{Prin2}
\mu_1=H-(n-2k)\frac{\varphi(k)\(c+H^2\)}{(n-k)H}
\end{equation}
are constant. It follows that
$\sum\limits_{\alpha=2}^m\lambda_\alpha^2, \sum\limits_{\alpha=2}^m\mu_\alpha^2$ are constant,
and thus
$$|\eta_1|^2=\lambda_1^2+\sum_{\alpha=2}^m\lambda_{\alpha}^2,\ |\eta_2|^2=\mu_1^2+\sum_{\alpha=2}^m\mu_{\alpha}^2,$$
$$\<\eta_1,\eta_2\>=\lambda_1\mu_1+\sum_{\alpha=2}^m\lambda_\alpha\mu_\alpha
=\lambda_1\mu_1-\frac{k}{n-k}\sum_{\alpha=2}^m\lambda_\alpha^2$$
are also constant.
Since $N_1(x)=\operatorname{Span}\{\eta_1(x),\eta_2(x)\}$ for each $x\in M$
and the determinant of the Gram matrix $\(\<\eta_i,\eta_j\>\)_{2\times2}$ is constant,
we know that $f$ is $1$-regular and $\operatorname{rank}N_1=1$ or $2$. 
To prove the mean curvature vector field $\mathcal{H}$ is parallel,
we divide the discussion into the following two cases.

Case (2-a): $\operatorname{rank}N_1=1$.
In this case, it follows from (\ref{eqtrH1}-\ref{RicGauss2}) that
$\lambda_1\mu_1\neq0$ as $H>0$ and $\lambda_\alpha=\mu_\alpha=0$ for each $2\leq\alpha\leq m$.
Then $\eta_1=\lambda_1\xi_1$ and $\eta_2=\mu_1\xi_1$ are smooth normal vector fields
and hence Dupin principal normal vector fields with multiplicities $k$ and $n-k$, respectively.
Therefore, $E_{\eta_1}$ and $E_{\eta_2}$ are smooth distributions such that $TM=E_{\eta_1}\oplus E_{\eta_2}$.
For every point of $M$, we can choose a smooth orthonormal local frame $\(E_1,\cdots,E_n\)$ such that $E_{\eta_1}=\operatorname{Span}\{E_1,\cdots,E_k\}$ and $E_{\eta_2}=\operatorname{Span}\{E_{k+1},\cdots,E_n\}$.
It follows that
$$\nabla_{E_i}^\bot\xi_1=\frac1{\lambda_1}\nabla_{E_i}^\bot\eta_1=0\
\mbox{for each}\ 1\leq i\leq k,$$
$$\nabla_{E_j}^\bot\xi_1=\frac1{\mu_1}\nabla_{E_j}^\bot\eta_2=0\
\mbox{for each}\ k+1\leq j\leq n,$$
and thus $\mathcal{H}=H\xi_1$ is parallel.

Case (2-b): $\operatorname{rank}N_1=2$.
In this case, we can assume additionally that $N_1\big|_U=\operatorname{Span}\{\xi_1,\xi_2\}$.
Then we have $\eta_1=\lambda_1\xi_1+\lambda_2\xi_2$ and $\eta_2=\mu_1\xi_1+\mu_2\xi_2$.
It follows from (\ref{eqtrH2}-\ref{Prin2}) that $\lambda_2, \mu_2$ are non-zero constants.
Then $\eta_1$ and $\eta_2$ are smooth normal vector fields
and hence Dupin principal normal vector fields with multiplicities $k$ and $n-k$, respectively.
Similarly, $E_{\eta_1}$ and $E_{\eta_2}$ are smooth distributions such that $TM=E_{\eta_1}\oplus E_{\eta_2}$.
For every point of $M$, we can choose a smooth orthonormal local frame $\(E_1,\cdots,E_n\)$ such that $E_{\eta_1}=\operatorname{Span}\{E_1,\cdots,E_k\}$ and $E_{\eta_2}=\operatorname{Span}\{E_{k+1},\cdots,E_n\}$.
Note that (\ref{shapeop}) implies that $$k\eta_1+(n-k)\eta_2=n\mathcal{H},$$
and thus we have
$$\operatorname{II}(E_1,E_1)=\eta_1=\frac{n}{k}\mathcal{H}-\frac{n-k}{k}\eta_2
=\frac{1}{k}\sum_{i=1}^n\operatorname{II}(E_i,E_i)-\frac{n-k}{k}\operatorname{II}(E_n,E_n),$$
i.e., $$(k-1)\operatorname{II}(E_1,E_1)-\sum_{i=2}^{n-1}\operatorname{II}(E_i,E_i)
+(n-k-1)\operatorname{II}(E_n,E_n)=0.$$
It follows from Lemma \ref{N1para} that $N_1$ is parallel, i.e.,
$\nabla_X^\bot\xi\in\Gamma(N_1)$ for any $X\in TM$ and any smooth section $\xi\in\Gamma(N_1)$.
By $\langle\xi_1,\xi_1\rangle=\langle\xi_2,\xi_2\rangle=1$, we obtain 
\begin{equation}\label{eqno1}
\<\nabla_{E_i}^\bot\xi_1,\xi_1\>=\<\nabla_{E_i}^\bot\xi_2,\xi_2\>=0\ \mbox{for each}\ 1\leq i\leq n.
\end{equation}
Since $\eta_1,\eta_2$ are Dupin principal normal vector fields, we have
\begin{equation}\label{eqno3}
\nabla_{E_i}^\bot\eta_1=\lambda_1\nabla_{E_i}^\bot\xi_1+\lambda_2\nabla_{E_i}^\bot\xi_2=0\
\mbox{for each}\ 1\leq i\leq k,
\end{equation}
\begin{equation}\label{eqno4}
\nabla_{E_j}^\bot\eta_2=\mu_1\nabla_{E_j}^\bot\xi_1+\mu_2\nabla_{E_j}^\bot\xi_2=0\
\mbox{for each}\ k+1\leq j\leq n.
\end{equation}
Combining (\ref{eqno1}-\ref{eqno4}) and the fact that $N_1$ is parallel, we have
$\nabla_{E_i}^\bot\xi_1=0$ for each $1\leq i\leq n$, and thus $\mathcal{H}=H\xi_1$ is parallel.

To sum up, we have already proved that $M$ is an Einstein submanifold with flat normal bundle and parallel mean curvature vector field.
Then by \cite[Theorem 1]{Onti18}, there exist an isometric immersion
$$g: M^n\rightarrow S^{n_1}\(r_1\)\times\cdots\times S^{n_l}\(r_l\)\subset S^{n+l-1}\big(1/\sqrt{\overline{c}}\,\big)$$
and a totally umbilical embedding
$$u: S^{n+l-1}\big(1/\sqrt{\overline{c}}\,\big)\rightarrow F^{n+m}(c)$$
such that $f=u\circ g$ and $\alpha(n,k,H,c)=(n-l)\overline{c}$ for an positive integer $l$,
where $r_i^2=\frac{n_i-1}{\alpha(n,k,H,c)}$, $2\leq n_i\leq n$ and $\sum_{i=1}^ln_i=n$.
Since $u$ is totally umbilical, it is easy to verify that
$H^2=H_g^2+H_u^2$ and $\overline{c}=c+H_u^2$,
where $H_g$ and $H_u$ denote the mean curvature of $g$ and $u$, respectively.
It follows that $\alpha(n,k,H,c)=(n-l)\(c+H_u^2\)\leq(n-l)\(c+H^2\)$.
Then by Lemma \ref{alpha}, we have $n-3<n-l$, i.e., $l=1$ or $2$.
Recall that $H_k(M;\mathbb{Z})=H_{n-k}(M;\mathbb{Z})=0$ does not hold in Case (2).
Hence, we must have $l=2$ and $M$ is isometric to the Einstein hypersurface
$S^{k}\(\sqrt{\frac{k-1}{n-2}}\,r\)\times S^{n-k}\(\sqrt{\frac{n-k-1}{n-2}}\,r\)$
in the totally umbilical sphere $S^{n+1}\(r\)\subset F^{n+m}(c)$, where $r=\sqrt{\frac{n-2}{\alpha(n,k,H,c)}}$.
Finally, one can verify by a direct calculation that the Ricci curvature and the mean curvature of
this Einstein hypersurface are exactly $$\operatorname{Ric}=\alpha(n,k,H,c)=\frac{n-2}{r^2}$$ and
$$H_g=\sqrt{H^2+c-\overline{c}}=\sqrt{\(\varphi(k)-\frac1{n-2}\)\(c+H^2\)}=
\frac{n-2k}{rn\sqrt{(k-1)(n-k-1)}}.$$
\end{proof}

\subsection{Proof of Corollary \ref{CorA}}
\begin{proof}
Note that Lemma \ref{alpha} implies that $(n-2)\(c+H^2\)\geq\alpha(n,\[\frac{n}2\],H,c)$
and the equality holds if and only if $n=2k$.
Then by Theorem \ref{ThmA} with $k=\[\frac{n}2\]$,
$M$ is either a simply connected homology sphere or isometric to the minimal Clifford hypersurface
$S^{k}\(r/\sqrt{2}\)\times S^{k}\(r/\sqrt{2}\)$
in the totally umbilical sphere $S^{n+1}\(r\)$, where $n=2k$ and $r=\frac{1}{\sqrt{c+H^2}}$.
It follows from the Hurewicz theorem, the Whitehead theorem and the generalized Poincar\'{e} conjecture that every simply connected homology sphere is a topological sphere.
\end{proof}

\subsection{Proof of Corollary \ref{CorB}}
\begin{proof}
Note that for $n=2k+1$, we have $$\alpha(n,k,H,c)=\(n-2-\frac{4}{n^3-2n^2-n-2}\)\(c+H^2\).$$
Then the proof is completed by the same argument as in the proof of Corollary \ref{CorA}.
\end{proof}

\begin{acknow}
The authors would like to thank Professor Marcos Dajczer, Professor Theodoros Vlachos and Professor Rodrigo Montes for their interest and comments.
\end{acknow}



\end{document}